\newtheorem{Th}{Theorem}[section]
\newtheorem{Prop}[Th]{Proposition}
\newtheorem{Lem}[Th]{Lemma}
\newtheorem{Cor}[Th]{Corollary}
\theoremstyle{definition}
\theoremstyle{remark}
\newtheorem{Rem}[Th]{Remark}
\def\BN{\mathbb N}
\def\BR{\mathbb R}
\def\SA{\mathcal A}
\def\SB{\mathcal B}
\def\SE{\mathcal E}
\def\SF{\mathcal F}
\def\SI{\mathcal I}
\def\SJ{\mathcal J}
\def\SK{\mathcal K}
\def\SN{\mathcal N}
\def\si{\sigma}
\def\sminus{\smallsetminus}
\def\({\left(}
\def\){\right)}
\def\oo{\infty}
\begin{document}

\title{On Two Theorems of Sierpi\'nski}

\author[Grzegorek and Labuda]{Edward Grzegorek and Iwo Labuda}

\address{E. Grzegorek\endgraf
Institute of Mathematics, University of Gda\'nsk\endgraf
Wita Stwosza 57, 80--308 Gda\'nsk,
Poland}
\email{egrzeg@mat.ug.edu.pl}
\address{I. Labuda\endgraf
Department of Mathematics, The University of Mississippi\endgraf
University, MS 38677, USA}

\email{mmlabuda@olemiss.edu}

\date{}

\subjclass[2010]{28A05, 54E52}

\keywords{Baire category, Baire property, $\mu$-measurability, measurable set, completely non-measurable set, full subset, outer measure, measurable envelope}

\begin{abstract}  A theorem of Sierpi\'nski says that every infinite set  $Q$ of reals contains an infinite number of  disjoint subsets  whose outer Lebesgue measure is the same as that of $Q$. He also has a  similar theorem involving  Baire property. We give a general theorem of this type and its  corollaries, strengthening  classical results.    \end{abstract}

\maketitle

\section{Motivation and Definitions}

The main result of this note is motivated by two theorems of Sierpi\'nski which, in turn, are connected with the results and techniques due to Lusin and Novikov \cite{Lus34}.
The first theorem (\cite[Theorem II]{Sie34a}) is

\begin{Th}\label{th:Sierp2} Let  $Q$ be an infinite subset of $\BR$. Then $Q$ contains an infinite number of disjoint subsets, each of which has the same outer Lebesgue measure as $Q$.
\end{Th}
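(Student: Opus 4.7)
The plan is to reduce the statement to a single splitting lemma — that an infinite $Q\sbs\BR$ with outer measure $m\in(0,\oo)$ can be decomposed as $Q=Q'\sqcup Q''$ with $\mu^*(Q')=\mu^*(Q'')=m$ — and then iterate this splitting to manufacture countably many disjoint subsets each of outer measure $m$. First I would dispose of the two easy cases. If $\mu^*(Q)=0$, then any partition of the infinite set $Q$ into countably many infinite pieces works, because each piece still has outer measure $0$. If $\mu^*(Q)=\oo$, I intersect $Q$ with a countable bounded measurable partition of $\BR$ (say unit intervals), apply the finite positive case on blocks of positive outer measure, and reassemble to obtain disjoint subsets of infinite outer measure.

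For $0<m<\oo$ I would pass to a measurable envelope $H\supseteq Q$: a measurable set with $\mu(H)=m$ satisfying $\mu^*(Q\cap E)=\mu(H\cap E)$ for every measurable $E$; such an $H$ arises as a countable intersection of open supersets of $Q$ whose measures decrease to $m$. The convenient reformulation is that a subset $P\sbs Q$ has $\mu^*(P)=m$ if and only if $P$ meets every compact $K\sbs H$ with $\mu(K)>0$. (Forward: otherwise $P\sbs H\sminus K$ gives $\mu^*(P)\leqs m-\mu(K)<m$. Backward: if $\mu^*(P)<m$, choose a measurable $E\supseteq P$ with $\mu(E)<m$; inner regularity produces a compact $K\sbs H\sminus E$ of positive measure missing $P$.)

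I would then enumerate the compact subsets of $H$ of positive measure as $\{K_\alpha:\alpha<\mathfrak c\}$ and perform a Bernstein-style transfinite recursion. At stage $\alpha$ I pick a countable sequence $x_{\alpha,1},x_{\alpha,2},\dots$ of pairwise distinct points of $K_\alpha\cap Q$ that are also distinct from all points chosen at earlier stages, and assign $x_{\alpha,n}$ to $Q_n$. By the characterisation above each $Q_n$ has outer measure $m$, and the $Q_n$ are pairwise disjoint by construction. One can also organise this as a single splitting $Q=Q'\sqcup Q''$ followed by iteration on $Q''$, which is arguably tidier.

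The principal obstacle is justifying the recursive choice. At stage $\alpha$ the set of points already used has cardinality at most $|\alpha|\cdot\aleph_0<\mathfrak c$, so the scheme requires $|K_\alpha\cap Q|\geqs\mathfrak c$ for every $\alpha$. Positive outer Lebesgue measure does not, on its own, force cardinality $\mathfrak c$ in ZFC, so this step needs care: it is automatic under CH (the setting of Sierpi\'nski's original proof), and otherwise a refinement — in the spirit of the Lusin--Novikov techniques flagged in the introduction, which uniformise Borel sets with countable sections — is needed to sidestep the cardinality requirement. I expect the note's actual argument to go through one of these routes, and this is where I would focus the technical effort.
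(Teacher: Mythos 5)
Your reduction to the case $0<\mu^*(Q)<\infty$, the passage to a measurable envelope $H$, and the characterisation of the full-outer-measure subsets of $Q$ as exactly those meeting every compact $K\subset H$ of positive measure are all correct. But the core of your argument --- the Bernstein-style transfinite recursion over an enumeration $\{K_\alpha:\alpha<\mathfrak c\}$ --- has a genuine gap which you flag but do not close: the recursion needs $K_\alpha\cap Q$ to have more points than have already been consumed at stage $\alpha$, and all that positive outer measure gives you in ZFC is that $K_\alpha\cap Q$ is uncountable; the least cardinality of a set of positive outer measure can consistently be $\aleph_1<\mathfrak c$. So your construction works under CH but not in ZFC, whereas the theorem is a ZFC theorem. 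Saying that ``a refinement in the spirit of Lusin--Novikov is needed'' is not a proof: that refinement is where all the work lies, and it does not take the form of a repaired cardinality count.

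The paper's route avoids point-by-point selection entirely. The ZFC input extracted from Lusin--Novikov is a \emph{non-separation} property: every set of positive outer measure contains two disjoint subsets $F,G$ that cannot be separated by a measurable set. Taking measurable envelopes $\tilde F,\tilde G$, the set $Q'=\tilde F\cap\tilde G$ is non-null and $F\cap Q'$, $G\cap Q'$ are disjoint full subsets of $Q'$ (Lemma~\ref{lem:main}). One then takes a maximal disjoint family of such measurable sets $Q_n$; by the countable chain condition it is countable and covers $Q$ up to a null set, and the unions $\bigcup_n(F_n\cap Q_n)$ and $\bigcup_n(G_n\cap Q_n)$ are two disjoint full subsets of $Q$; iterating the splitting gives infinitely many (Theorem~\ref{th:main}). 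To complete your write-up you would have to replace the transfinite recursion by this (or an equivalent) use of non-separation; as it stands your argument establishes the theorem only under an additional cardinal hypothesis.
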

 The second  (see  Th\'eor\`eme 1 and a remark following it in  \cite[Suppl\'ement]{Sie34}) is

\begin{Th}\label{th:Sierp1} Let  $Q$ be a subset of an interval $J$.  If $Q$ is everywhere 2nd category in $J$, then it is the union of an infinite number of disjoint subsets, each of which is everywhere 2nd category in $J$.
\end{Th}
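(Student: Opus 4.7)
The plan is to mirror the strategy behind Theorem~\ref{th:Sierp2} in the Baire-category setting. Where that argument relies on a measurable envelope of $Q$, I would use the analogous \emph{category envelope}: for $E\subseteq J$, a set $E^{\sim}\supseteq E$ with the Baire property, minimal modulo meager, so that $E^{\sim}\setminus F$ is meager whenever $F\supseteq E$ has the Baire property. Such $E^{\sim}$ exists and is unique up to meager sets, and the hypothesis ``$Q$ is everywhere 2nd category in $J$'' becomes the statement that $J\setminus Q^{\sim}$ is meager --- the envelope is ``full.''

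My first step would be to reduce to a binary splitting lemma: if $E\subseteq J$ has full envelope, then $E=A\sqcup B$ for some disjoint $A,B$ each having full envelope. The second step is to iterate: put $E_{0}=Q$ and recursively split $E_{n}=Q_{n+1}\sqcup E_{n+1}$, producing pairwise disjoint everywhere-2nd-category subsets $Q_{1},Q_{2},\dots$ of $Q$, with the residual $\bigcap_{n}E_{n}$ absorbed into $Q_{1}$ (enlarging an everywhere-2nd-category set preserves the property).

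The splitting lemma, which is the crux, I would prove by a Bernstein-type transfinite induction. Enumerate the pairs $(V,M)$ with $V$ in a countable base of nonempty open subsets of $J$ and $M$ a meager $F_{\sigma}$ subset of $J$; there are $\mathfrak{c}$ of them. At stage $\alpha<\mathfrak{c}$ select two distinct, previously-unchosen points of $(E\cap V_{\alpha})\setminus M_{\alpha}$, placing one in $A$ and the other in $B$. Then neither $A\cap V$ nor $B\cap V$ is contained in any meager $F_{\sigma}$ set for any basic open $V$, so neither is meager; both $A$ and $B$ are therefore everywhere 2nd category in $J$. Remaining points of $E$ are distributed arbitrarily.

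The principal obstacle is the cardinality accounting: at each stage $\alpha$ we need $|(E\cap V_{\alpha})\setminus M_{\alpha}|$ to exceed the $2|\alpha|<\mathfrak{c}$ points already consumed. The target set is non-meager and hence uncountable, which suffices under CH; but in ZFC alone a non-meager subset of $J$ need not reach cardinality $\mathfrak{c}$, so a direct execution can stall. I expect the ``full subset'' formalism flagged in the paper's keywords to isolate exactly this combinatorial content and to provide the authors' unified vehicle for both Theorems~\ref{th:Sierp2} and~\ref{th:Sierp1}.
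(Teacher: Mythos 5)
Your overall architecture (split into two ``large'' pieces, then iterate) matches the paper's, and your observation that ``everywhere 2nd category in $J$'' should be recast as ``the envelope is full'' is exactly the reduction the paper makes (via its Proposition relating everywhere-2nd-category sets to full sets in a Baire space, followed by Corollary~\ref{cor:baire}). But your proof of the crux --- the binary splitting lemma --- has a genuine gap, which you yourself flag and do not close: the Bernstein-type transfinite induction of length $\mathfrak{c}$ requires that at every stage $\alpha$ the set $(E\cap V_{\alpha})\setminus M_{\alpha}$ still contain two unused points, and for that you need every non-meager subset of $J$ to have cardinality $\mathfrak{c}$. A non-meager set is merely uncountable; its cardinality is only bounded below by the uniformity number of the meager ideal, which is consistently smaller than $\mathfrak{c}$. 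So the argument is valid under CH (or under the hypothesis that this uniformity number equals $\mathfrak{c}$) but not in ZFC, and Sierpi\'nski's theorem is a ZFC theorem. Your closing guess --- that the ``full subset'' formalism isolates this same combinatorial content --- is not how the paper proceeds; no point-by-point selection occurs anywhere in it.

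What the paper does instead is replace the selection of points by the Lusin--Novikov non-separation theorem: every non-meager $E$ contains two \emph{disjoint subsets $F,G$ that cannot be separated by a set with the Baire property}. Intersecting with $Q=\tilde F\cap\tilde G$ (the common part of their envelopes) yields a non-meager measurable $Q$ in which both $F\cap Q$ and $G\cap Q$ are full (Lemma~\ref{lem:hull} and Lemma~\ref{lem:main}); this is a \emph{local} splitting, valid only inside $Q$. The global splitting is then obtained by a maximality-plus-CCC exhaustion: take a maximal disjoint family $\{Q_n\}$ of such sets, show the part of $E$ they miss is meager, and take the unions of the local pieces (Theorem~\ref{th:main}). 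This route costs the Lusin--Novikov input (which is where the real work is hidden, in \cite{Grze17}) but buys a ZFC proof that works uniformly for measure and category, whereas your route is elementary but only succeeds under an extra set-theoretic hypothesis. If you want to salvage your approach, the piece you are missing is precisely a ZFC substitute for the stage-by-stage point supply, and the non-separation property is that substitute.
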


We recall that $Q$ is said to be everywhere 2nd category in $J$ if $Q$ is 2nd category at each point $x\in J$. The latter means that for each neighborhood $V$ of $x$, the intersection $V\cap Q$ is 2nd category.

When searching for a common abstraction of those results,  one needs to find a proper notion of the `size' of a subset $E$ in $Q$, a notion that would cover the measure case as well as  the Baire category case.

Our basic framework is a \textit{triple $(X,\SA,\SI)$}, where $X$ is a set, $\SA$ a $\si$-algebra of  its subsets (which are called \textit{measurable}), and $\SI$  a $\si$-ideal of its subsets (which are called \textit{negligible}) such that $\SI\subset \SA$.
 We will use the following definition that goes back at   least to \cite{Grz79} (in that paper the subset $E$ was qualified as \textit{being of the same size as} $Q$).

Let $E$ and $Q$ be subsets of $X$, with $E\subset Q$. We say that $E$ \textit{is a full subset of} $Q$ if, for each  $B\in\SA$ containing $E$, the set $B$ must  contain $Q$ modulo $\SI$, that is, one has $Q\sminus B\in\SI$.
In a situation in which one would like to specify the $\si$-ideal, we may also write that $E$ is an $\SI$-full subset of $Q$. One easily shows

\begin{Prop}\label{prop:full}  The following are equivalent.
\begin{enumerate}
\item[\rm{(a)}] $E$ is a full subset of $Q$.
\item[\rm{(b)}] For each $A\in\SA$, if $A\cap Q\not\in\SI$, then $A\cap E\not\in\SI$.
\end{enumerate}
\end{Prop}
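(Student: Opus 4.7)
The plan is to prove each implication directly from the definitions, since the statement is essentially a logical reformulation.

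For the implication \textrm{(a)} $\Rightarrow$ \textrm{(b)}, I would argue by contrapositive: suppose $A\in\SA$ is such that $A\cap E\in\SI$, and aim to show $A\cap Q\in\SI$. The natural candidate $B=X\sminus A$ need not contain $E$, so the trick is to enlarge it slightly by the negligible piece $A\cap E$. Put
\[
B=(X\sminus A)\cup(A\cap E).
\]
Since $\SI\sbs\SA$, this is in $\SA$, and by construction $E\sbs B$. Hypothesis \textrm{(a)} gives $Q\sminus B\in\SI$. A direct set-theoretic computation shows $Q\sminus B=(Q\cap A)\sminus E$, so $(Q\cap A)\sminus E\in\SI$; combining this with $A\cap E\in\SI$ and $E\sbs Q$ yields $A\cap Q\in\SI$.

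For \textrm{(b)} $\Rightarrow$ \textrm{(a)}, the argument is cleaner: given $B\in\SA$ with $E\sbs B$, take $A=X\sminus B\in\SA$. Then $A\cap E=\emptyset\in\SI$, so the contrapositive of \textrm{(b)} forces $A\cap Q\in\SI$, i.e.\ $Q\sminus B\in\SI$, which is exactly \textrm{(a)}.

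The only step that requires any thought is the choice of the test set $B$ in \textrm{(a)} $\Rightarrow$ \textrm{(b)}; once one sees that $E$ must be adjoined to $X\sminus A$ to make the containment $E\sbs B$ hold, everything else is formal manipulation and uses only the $\si$-ideal axioms implicitly through the inclusion $\SI\sbs\SA$ and closure under finite unions.
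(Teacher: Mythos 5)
Your proof is correct; the paper offers no proof of this proposition (it is dismissed with ``One easily shows''), and your argument --- enlarging $X\sminus A$ by the negligible set $A\cap E$ to get a measurable superset of $E$ in one direction, and testing with $A=X\sminus B$ in the other --- is exactly the routine verification the authors had in mind.
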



 Following \cite{Zeb07}, let us say that a subset $E$ of $Q$ is  
$(\SA,\SI)$-\textit{completely non-measurable in $Q$} if the following condition is satisfied: for each measurable set $A$, if $A\cap Q \not\in\SI$, then $A\cap E\not\in\SI$ and $A\cap (Q\sminus E)\not\in\SI$.

In view of the condition (b) in the Proposition above, we have
\begin{Prop}\label{prop:doubly-full}Let $E\subset Q$.
Then  $E$ is $(\SA,\SI)$-completely non-measurable in $Q$ if\-f $E$ and $Q\sminus E$ are full subsets of $Q$.
\end{Prop}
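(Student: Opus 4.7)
The plan is to derive this equivalence directly from the characterization (b) of fullness given in Proposition \ref{prop:full}. Complete non-measurability of $E$ in $Q$ is, by definition, a conjunction of two implications, and each of them will correspond, via Proposition \ref{prop:full}, to the fullness of one of the two subsets $E$ and $Q\sminus E$ of $Q$.

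More precisely, first I would note that both $E$ and $Q\sminus E$ are subsets of $Q$, so the notion of fullness applies to each of them. Then I would unpack the definition: $E$ is $(\SA,\SI)$-completely non-measurable in $Q$ exactly when the following two conditions hold simultaneously for every $A\in\SA$ with $A\cap Q\not\in\SI$:
\begin{enumerate}
\item[(i)] $A\cap E\not\in\SI$;
\item[(ii)] $A\cap(Q\sminus E)\not\in\SI$.
\end{enumerate}
By Proposition \ref{prop:full}, condition (i) (holding for every such $A$) is equivalent to $E$ being a full subset of $Q$, and condition (ii) (holding for every such $A$) is equivalent to $Q\sminus E$ being a full subset of $Q$. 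Taking the conjunction yields the desired equivalence in both directions.

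I do not anticipate any real obstacle: this proposition is essentially a bookkeeping reformulation of the previous one. The only point worth being careful about is that the quantifier ``for each $A\in\SA$'' distributes over the conjunction of (i) and (ii), so that the two implications can indeed be decoupled and each matched with a separate instance of Proposition \ref{prop:full}(b).
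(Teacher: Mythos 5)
Your proof is correct and follows exactly the route the paper intends: the paper derives the proposition "in view of condition (b)" of Proposition~\ref{prop:full}, i.e., by matching each of the two clauses in the definition of complete non-measurability with an instance of (b) for $E$ and for $Q\sminus E$ respectively. Your remark about the universal quantifier distributing over the conjunction is the only point of substance, and you handle it correctly.
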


 Consequently, if $\{E_1,E_2\}$ is a decomposition of a set  into disjoint full subsets, then it is its decomposition into $(\SA,\SI)$-completely non-measurable subsets. 

In what follows we  simply write \textit{completely non-measurable}, since  there will not be any ambiguity about $\SA$ and~$\SI$.

\section{Result}

Let $F$ be a subset of $X$.
We say that a set $\tilde F$ containing $F$ is a \textit{measurable envelope} of $F$ if $\tilde F\in\SA$ and, for each measurable $A$, the containment $A \subset\tilde F\sminus F$ implies $A\in\SI$. Below `envelope' will mean `measurable envelope'. It is easily seen that a measurable set $Q$ is an envelope of $E$ if\-f $E$ is full in $Q$.
Further, we have

\begin{Lem}\label{lem:hull} Let $Q$ be a measurable  set contained in  $\tilde F$. Then $Q$ is an envelope of $Q\cap F$. In particular, $Q\cap F$ is full in $Q$.
\end{Lem}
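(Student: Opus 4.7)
The plan is to verify directly that $Q$ meets the definition of a measurable envelope of $Q\cap F$, and then to read off the ``in particular'' clause from the characterization recorded just before the lemma.

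First I would dispatch the two trivial requirements: $Q\in\SA$ holds by hypothesis and $Q\cap F\subset Q$ is obvious. The only substantive item is the negligibility condition, namely that for every measurable $A$ with $A\subset Q\sminus(Q\cap F)$ one has $A\in\SI$. The key step here is the inclusion
\[
Q\sminus(Q\cap F)=Q\sminus F\subset \tilde F\sminus F,
\]
which is immediate from the assumption $Q\subset\tilde F$. Hence any measurable $A$ contained in $Q\sminus(Q\cap F)$ is automatically contained in $\tilde F\sminus F$, and the defining property of $\tilde F$ as an envelope of $F$ forces $A\in\SI$. This shows $Q$ is an envelope of $Q\cap F$.

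For the ``in particular'' assertion, I would simply invoke the observation noted in the paragraph preceding the lemma: a measurable set $Q$ is an envelope of $E$ if and only if $E$ is full in $Q$. Applied to $E=Q\cap F$, this yields at once that $Q\cap F$ is full in $Q$. There is no real obstacle: the whole argument is a direct unwinding of the definitions, resting on the single set-theoretic inclusion $Q\sminus F\subset\tilde F\sminus F$.
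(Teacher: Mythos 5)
Your proposal is correct and follows essentially the same argument as the paper: both rest on the inclusion $Q\sminus(Q\cap F)\subset\tilde F\sminus F$ to conclude that any measurable $A\subset Q\sminus(Q\cap F)$ lies in $\SI$, and both read off the fullness of $Q\cap F$ in $Q$ from the observation preceding the lemma.
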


Indeed, let $A$ be a measurable set contained in $Q\sminus (Q\cap F)$. Then $A\subset\tilde F\sminus F$ and so $A\in\SI$. Hence $Q$ is an  envelope of $Q\cap F$.

\medskip

Let a triple $(X,\SA,\SI)$, as defined in  Section 1, be given. \textit{ Throughout this Section we     impose the following two assumptions on} $(X,\SA,\SI)$.

 \begin{itemize}
\item The family $\SA\sminus\SI$ satisfies the countable chain condition CCC, that is, every disjoint family of non-negligible measurable sets  is countable. It is well-known and  easily shown that under CCC each subset of $X$ admits an envelope.
\item $X$ has the \textit{non-separation property with respect to $\SA$}. Namely, if $E\not\in\SI$, then there exist disjoint subsets  of $E$ which cannot be separated by  sets from $\SA$ (in particular, the singletons must belong to $\SI$).
\end{itemize}

\begin{Lem}\label{lem:main}
If $E\not\in\SI$, then there exist
a measurable set $Q\not\in\SI$  and disjoint sets $E_1, E_2$ contained in $E\cap Q$ such that $E_1$ and $E_2$ are full subsets of $Q$.  \end{Lem}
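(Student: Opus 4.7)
The plan is to apply the non-separation property to produce two disjoint subsets of $E$, take their measurable envelopes, and intersect them. First, the non-separation property applied to $E$ would yield disjoint $F_1, F_2 \subset E$ that cannot be separated by sets from $\SA$. Since CCC guarantees that every subset of $X$ has a measurable envelope, I would let $\tilde F_1, \tilde F_2$ be envelopes of $F_1, F_2$ and set $Q := \tilde F_1 \cap \tilde F_2$.

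The key step is to verify that $Q \notin \SI$. Assume for contradiction that $Q \in \SI$. Then $A_i := \tilde F_i \sminus Q$ ($i = 1, 2$) are disjoint and measurable, while $N_i := F_i \cap Q$ are subsets of the negligible set $Q$, so $N_i \in \SI \subset \SA$. Put $B_i := A_i \cup N_i$. Each $B_i$ is measurable; the pair is disjoint, since $A_1 \cap A_2 = N_1 \cap N_2 = \varnothing$ and $A_j \cap N_i \subset A_j \cap Q = \varnothing$; and $F_i \subset B_i$, because $F_i \subset \tilde F_i = A_i \cup Q$ forces $F_i = (F_i \cap A_i) \cup (F_i \cap Q) \subset A_i \cup N_i = B_i$. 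Thus $B_1, B_2$ would be a measurable separation of $F_1$ and $F_2$, contradicting the choice of the $F_i$.

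With $Q \notin \SI$ in hand, I set $E_i := Q \cap F_i$. These are disjoint subsets of $E \cap Q$, and Lemma \ref{lem:hull} applied to the inclusion $Q \subset \tilde F_i$ gives that each $E_i$ is full in $Q$, finishing the argument.

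The main obstacle is the verification $Q \notin \SI$. The delicate point is that the natural candidates $A_i = \tilde F_i \sminus Q$ for a measurable separation of $F_1, F_2$ miss the pieces $F_i \cap Q$; absorbing those (negligible, hence measurable) pieces back into the $A_i$ is what actually forces the contradiction. Once $Q$ is out of $\SI$, Lemma \ref{lem:hull} delivers the fullness conclusion with no further work.
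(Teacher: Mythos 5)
Your proposal is correct and follows essentially the same route as the paper: take two non-separable disjoint subsets of $E$, intersect their envelopes to get $Q$, and invoke Lemma~\ref{lem:hull}. The only cosmetic difference is in verifying $Q\not\in\SI$ --- the paper observes directly that $\tilde F\cap G\in\SI$ would make $\tilde F\sminus G$ a measurable separator (and $\tilde F\cap G\subset Q$), while you argue the contrapositive by assembling the explicit disjoint measurable pair $B_i=(\tilde F_i\sminus Q)\cup(F_i\cap Q)$; both rest on the same idea.
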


\begin{proof} Let $F$ and $G$ be disjoint subsets of $E$ that cannot be separated by measurable sets. Denote by  $\tilde F$  and $\tilde G$   their respective envelopes. Put $Q=\tilde F\cap\tilde G$.  As $F$ and $G$  are not separated,  $\tilde F\cap G\not\in\SI$, since otherwise $\tilde F\sminus (\tilde F\cap G)$  would be a measurable set separating $F$ and $G$.  Hence $Q=\tilde F\cap\tilde G$ is not negligible. By Lemma~\ref{lem:hull}, $Q$ is an envelope of  $F\cap Q$ and also of $G\cap Q$. Thus, $E_1= F\cap\ Q$ and $E_2=G\cap Q$ are full in $Q$.
\end{proof}

\begin{Rem} Envelopes are also instrumental in characterizing fullness.  Notably, the following Proposition holds:\textit{ Let $E\subset Q\subset X$ and assume that $Q$ admits an envelope. Then  $E$ is a full subset of $Q$ if\-f any envelope of $Q$ is an envelope of $E$.}
\end{Rem}

Here is our main result.

\begin{Th}\label{th:main} Each infinite subset $E$ of $X$ decomposes into  two disjoint full subsets. Consequently, it can be written as the infinite disjoint union of its full (equivalently -- completely non-measurable) subsets.
\end{Th}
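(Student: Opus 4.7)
My plan is to split on whether $E$ lies in $\SI$. When $E\in\SI$, the conclusion is essentially free: $E$ is still infinite as a set (singletons are negligible), so partition it into any countable collection of disjoint infinite subsets; each piece $F$ is trivially full in $E$ since any measurable $B\supset F$ gives $E\sminus B\sbs E\in\SI$.

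When $E\not\in\SI$, I would first produce a two-piece decomposition $E=F_1\sqcup F_2$ into full subsets and then bootstrap. Use Zorn's lemma to select a maximal disjoint family $\{Q_\alpha\}_{\alpha\in I}$ of measurable non-negligible sets each equipped with a decomposition $E\cap Q_\alpha=E_1^\alpha\sqcup E_2^\alpha$ into two full subsets of $Q_\alpha$ (Lemma~\ref{lem:main} provides these locally; one may replace the given $E_2$ by $(E\cap Q)\sminus E_1$ to ensure the pieces cover $E\cap Q$, since enlarging a full subset inside $Q$ preserves fullness). CCC forces $I$ countable. The crux is showing $R:=E\sminus\bigcup_\alpha Q_\alpha\in\SI$: otherwise Lemma~\ref{lem:main} applied to $R$ yields a measurable non-negligible $Q'$ with full subsets $E_1',E_2'$ in $R\cap Q'$; setting $Q'':=Q'\sminus\bigcup_\alpha Q_\alpha$ (measurable, as the $Q_\alpha$ are and there are countably many), the set $Q''$ contains both $E_1'$ and $E_2'$, so fullness of $E_1'$ in $Q'$ forces $Q'\sminus Q''\in\SI$, hence $Q''$ is non-negligible and the two subsets remain full in $Q''$, contradicting maximality. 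Then $F_1:=\bigcup_\alpha E_1^\alpha$ and $F_2:=R\cup\bigcup_\alpha E_2^\alpha$ are full in $E$: for measurable $B\supset F_1$, each $E_1^\alpha\sbs B$ gives $Q_\alpha\sminus B\in\SI$, so a countable union plus the negligible $R$ yields $E\sminus B\in\SI$; symmetrically for $F_2$.

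To pass to countably many pieces, I would first prove the transitivity of fullness — if $A\sbs B\sbs E$ with $A$ full in $B$ and $B$ full in $E$, then $A$ is full in $E$ (given measurable $C\supset A$, use $C\cup(B\sminus C)\in\SA$ to deduce $E\sminus C\in\SI$) — and then iterate the two-piece decomposition: $E=G_1\sqcup H_1$, $H_1=G_2\sqcup H_2$, and so on. Each $H_n$ is non-negligible, for otherwise a fullness argument forces $H_{n-1}\in\SI$; hence $H_n$ is infinite and the next split is available. Transitivity makes each $G_n$ full in $E$, and absorbing the residue $\bigcap_n H_n$ into $G_1$ yields a genuine partition $E=\bigsqcup_{n\geqs1}G_n$ into full (hence by Proposition~\ref{prop:doubly-full} completely non-measurable) subsets. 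The main obstacle I anticipate is the trimming step in the two-piece argument — one must check carefully that the $Q'$ furnished by Lemma~\ref{lem:main} can be replaced by a measurable set disjoint from $\bigcup_\alpha Q_\alpha$ while preserving both non-negligibility and the fullness of its two chosen subsets.
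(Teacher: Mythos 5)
Your proof is correct and follows essentially the same route as the paper's: Lemma~\ref{lem:main} feeding a maximal disjoint family of non-negligible measurable sets each admitting two disjoint full pieces of $E$, CCC to make the family countable, negligibility of the residue by maximality, and iteration via transitivity of fullness. The extra details you supply (the trivial case $E\in\SI$, the explicit verification of transitivity, and absorbing $\bigcap_n H_n$ into the first piece) are exactly the points the paper leaves to the reader.
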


\begin{proof}
We first show the decomposition into two subsets. We may assume that $E\not\in\SI$.

Let us say  that a set \textit{ $P$  has the  equal sizes property relative to  $E$} whenever $P$ is a non-negligible measurable  set    such that there exist disjoint sets $E_1, E_2$ contained in $E\cap P$, each one full in $ P$.

Let  $\SF$ be a maximal disjoint family of sets $\{Q_\xi\}$ having that property. It is not empty by Lemma~\ref{lem:main}.  By   the countable chain condition, $\SF=Q_1, Q_2,\dots$. Observe first that $$
E^*= E\sminus\bigcup_{n=1}^\infty Q_n\in\SI. \leqno(*)
$$
Indeed, otherwise one could apply Lemma~\ref{lem:main} to $E^*$ and find a corresponding  set $Q^*$.  That is,
$Q^*$ belongs to $\SA\sminus\SI$ and
there exist disjoint subsets $E_1$ and $E_2$ of $E^*\cap Q^*$ which are both full in $Q^*$. We thus have $E_1$ and $E_2$ contained in $A$, where $A= Q^*\sminus \bigcup_n Q_n$.
 As $A\subset Q^*$, the sets $E_1, E_2$ are full in $A$. Also,   $A\not\in\SI$ since it contains $E_1$ which is not in $\SI$. To sum up, $A$ is disjoint with all $Q_n$'s and has the equal sizes property relative to $E$. This contradicts the fact that   $\SF$ is maximal.

Now, for each $n\in\BN$,  let $C_n$ and $D_n$ be  disjoint  subsets of $E\cap Q_n$ that, moreover, are full subsets of $Q_n$. We claim that  $C=\bigcup_n C_n$ and $D=\bigcup D_n$ are full subsets of $E$. Indeed,  let $C$ be contained in a measurable set $B$. Then $C_n\subset B$ and so $Q_n\sminus B\in\SI$ for each $n\in\BN$. This implies that $(\bigcup Q_n)\sminus B\in\SI$ and finally, by $(*)$, that $E\sminus B\in\SI$. For  $D\subset B$, the  argument is similar.
If $D$ is not the complement of $C$ in $E$, we can replace it by that complement in order to get the needed decomposition.

By what we have just shown, $E$ is the union of two full subsets, $E_1$ and its complement $E_1'$ in $E$. Next, by the same argument, decompose $E_1$ into $E_2$ and $ E_2'$. It can readily be checked that $E_2$, as a full subset of a full subset of $E$, is full in $E$. Proceed inductively.
\end{proof}

\begin{Rem}\label{rem:rem}   The proof above is a modification of Sierpi\'nski's argument in the proof of Th\'eor\`eme I in \cite{Sie34a}. It is Lemma~\ref{lem:main} that makes the passage to the abstract  case possible. For  the Lebesgue measure, Lusin stated in \cite{Lus34} a fact stronger than Lemma~\ref{lem:main} (see \cite[Appendix]{Grze17}) and it was this stronger fact that Sierpi\'nski used in his proof of Theorem~\ref{th:Sierp2}.
\end{Rem}

Consider  triples $(X,\SA_n,\SJ_n)$, $\,n\in\BN$, where $\SA_n$ are  $\si$-algebras and $\,\SJ_n\subset\SA_n$ are $\si$-ideals  in $X$, together with $(X,\SA^\oo,\SN),$  where the $\si$-algebra $\SA^\oo=\bigcap_n\SA_n$ and the $\si$-ideal $\SN=\bigcap_n\SJ_n$. Assume that the triples $(X,\SA_n,\SJ_n)$ satisfy the  assumptions specified by the bullets  above.

\begin{Prop}\label{prop:stability}  The family  $\SA^\oo\sminus\SN$ satisfies CCC and $X$ has the non-separation property with respect to~$\SA^\oo$.
\end{Prop}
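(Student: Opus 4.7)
My plan is to verify the two assertions separately, and both reduce quickly to the corresponding property of the individual triples $(X,\SA_n,\SJ_n)$.

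For the countable chain condition on $\SA^\oo\sminus\SN$, I would take an arbitrary disjoint family $\{B_\alpha\}_{\alpha\in I}\sbs\SA^\oo\sminus\SN$ and show $I$ is countable. Since $\SN=\bigcap_n\SJ_n$, for each $\alpha$ there exists some $n=n(\alpha)$ with $B_\alpha\notin\SJ_n$. Define
\[
I_n=\{\alpha\in I:B_\alpha\notin\SJ_n\},\qquad n\in\BN,
\]
so that $I=\bigcup_n I_n$. Now $\{B_\alpha\}_{\alpha\in I_n}$ is a disjoint subfamily of $\SA_n\sminus\SJ_n$ (using $\SA^\oo\sbs\SA_n$), and hence is countable by the CCC assumed for $(X,\SA_n,\SJ_n)$. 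A countable union of countable sets being countable, $I$ is countable, as required.

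For the non-separation property with respect to $\SA^\oo$, let $E\notin\SN$. Again because $\SN=\bigcap_n\SJ_n$, there is some $n$ with $E\notin\SJ_n$. The non-separation property of $(X,\SA_n,\SJ_n)$ then furnishes disjoint subsets $F,G\sbs E$ which cannot be separated by sets from $\SA_n$. Since $\SA^\oo\sbs\SA_n$, any separating set in $\SA^\oo$ would already separate $F$ and $G$ in $\SA_n$; hence $F,G$ cannot be separated by sets from $\SA^\oo$ either. This yields the desired non-separation property for $(X,\SA^\oo,\SN)$.

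There is no real obstacle here: both parts are monotonicity observations exploiting the fact that intersecting the $\SA_n$ can only shrink the available separating sets and that $E\notin\bigcap_n\SJ_n$ means $E$ avoids some individual $\SJ_n$. The only point worth highlighting is that the singletons, which must lie in every $\SJ_n$ (by the non-separation property of each triple, as noted in the bullet in the paper), automatically lie in $\SN$, so this qualitative feature of non-separation is also preserved.
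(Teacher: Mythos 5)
Your proof is correct and follows essentially the same route as the paper: both arguments split the disjoint family according to which $\SJ_n$ each set avoids and invoke the CCC of the individual triple, and both obtain non-separation by passing from some $\SA_n$ to the smaller algebra $\SA^\oo$ a fortiori. No gaps.
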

\begin{proof}
Let $\SE$ be a family of disjoint sets in $\SA^\oo$ that do not belong to $\SN$. Then, $\SE=\bigcup_{n=1}^\oo\SE_n$, where $\SE_n\subset\{A\in\SA^\oo: A\not\in\SJ_n\}\subset \{A\in\SA_n: A\not\in\SJ_n\}$. Hence each $\SE_n$  is countable, and so $\SE$ is countable as well.

Suppose  $E$ is a subset of $X$ such that  $E\not\in\SN$.  Then,  $E\not\in\SJ_n$ for some $n\in\BN$. Consequently, there exist two disjoint subsets of $E$ that cannot be separated by sets from $\SA_n$. A fortiori, they cannot be separated by sets from $\SA^\oo$.
\end{proof}

\begin{Cor}\label{cor:intersection}
Each infinite subset $E$ of $X$ decomposes into  two disjoint $\SN$-full subsets. Consequently, it can be written as the infinite disjoint union of its subsets, each of which is $\SN$-full (equivalently -- completely non-measurable with respect to to $\SA^\oo$).
\end{Cor}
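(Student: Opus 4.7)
The plan is essentially to observe that the corollary is an immediate application of Theorem~\ref{th:main} to the triple $(X,\SA^\oo,\SN)$, once the hypotheses have been verified. Proposition~\ref{prop:stability} already does precisely this verification: it establishes both that $\SA^\oo\sminus\SN$ satisfies CCC and that $X$ has the non-separation property with respect to $\SA^\oo$. These are exactly the two bulleted standing assumptions of Section~2 that Theorem~\ref{th:main} relies on.

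Therefore my proof would consist of a single line: by Proposition~\ref{prop:stability}, the triple $(X,\SA^\oo,\SN)$ satisfies the standing hypotheses of Section~2, so Theorem~\ref{th:main} applies verbatim to it, yielding the decomposition of an arbitrary infinite $E\subset X$ into two (and hence countably many) disjoint $\SN$-full subsets. The parenthetical equivalence with complete non-measurability with respect to $\SA^\oo$ is then just Proposition~\ref{prop:doubly-full} applied in the triple $(X,\SA^\oo,\SN)$.

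There is essentially no obstacle here: all the work has been front-loaded into Proposition~\ref{prop:stability}, whose role is precisely to ensure that the abstract Theorem~\ref{th:main} is inherited by the intersection triple even though the individual $\SJ_n$ and $\SA_n$ do not appear directly in its conclusion. The only thing one might want to underline in the write-up is that the fullness in the conclusion is taken in the sense of $\SN$ (the intersection ideal), not in the sense of any single $\SJ_n$; this is automatic since Theorem~\ref{th:main} produces subsets that are full relative to the $\si$-ideal of whatever triple it is applied to.
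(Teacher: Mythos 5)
Your proposal is correct and matches the paper's intent exactly: the corollary is stated without proof immediately after Proposition~\ref{prop:stability}, precisely because that proposition verifies the two standing assumptions of Section~2 for the triple $(X,\SA^\oo,\SN)$, so Theorem~\ref{th:main} applies directly. Nothing further is needed.
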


Let $E=\bigcup_{i=1}^\oo E_i$ be the decomposition  obtained for the set $E$ in the above Corollary.

\begin{Prop}\label{prop:sim} Assume that, for each $n\in\BN$ and each $A\in \SA_n$, there exists $B\in\SA^\oo$ such that $A\subset B$ and $B\sminus A\in\SJ_n$.
Then each $E_i$ in the decomposition  is $\SJ_n$-full (equivalently -- completely non-measurable with respect to $\SA_n$) simultaneously for all $n\in\BN$.
\end{Prop}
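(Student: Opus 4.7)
The plan is to leverage the $\SN$-fullness of each $E_i$ in $E$ (and of each complement $E\sminus E_i$ in $E$), which is already guaranteed by Corollary~\ref{cor:intersection}, and to upgrade it to $\SJ_n$-fullness for every $n$ by means of the hypothesis. Recall that $\SN=\bigcap_n\SJ_n\sbs\SJ_n$, so any set in $\SN$ is automatically in $\SJ_n$; the hypothesis is what lets us bridge an arbitrary $A\in\SA_n$ to an $\SA^\oo$-set, which is the setting in which the $\SN$-fullness hypothesis on $E_i$ speaks.

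First I would record the preliminary observation that in the decomposition produced by Corollary~\ref{cor:intersection} (i.e.\ by applying Theorem~\ref{th:main} to the triple $(X,\SA^\oo,\SN)$), not only is each $E_i$ $\SN$-full in $E$ but so is each complement $E\sminus E_i$. Inspecting the inductive construction of Theorem~\ref{th:main}, at the $i$-th stage $E_i$ is obtained together with a companion $E_i'$ which is $\SN$-full in the previously isolated full piece (hence in $E$), and since $E_i'\sbs E\sminus E_i$, the upward monotonicity of fullness (a set containing a full subset is itself full) gives the $\SN$-fullness of $E\sminus E_i$ in $E$.

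The main step is then short. Fix $n\in\BN$ and $A\in\SA_n$ with $E_i\sbs A$. By the hypothesis pick $B\in\SA^\oo$ with $A\sbs B$ and $B\sminus A\in\SJ_n$. Since $E_i\sbs B$ and $E_i$ is $\SN$-full in $E$, one has $E\sminus B\in\SN\sbs\SJ_n$. The elementary containment
$$
E\sminus A\;\sbs\;(E\sminus B)\cup(B\sminus A)
$$
then forces $E\sminus A\in\SJ_n$, establishing $\SJ_n$-fullness of $E_i$ in $E$. The same sandwich argument applied to $E\sminus E_i$ in place of $E_i$ (using its $\SN$-fullness recorded in the previous paragraph) yields $\SJ_n$-fullness of $E\sminus E_i$ in $E$. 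Invoking Proposition~\ref{prop:doubly-full} with respect to the triple $(X,\SA_n,\SJ_n)$ then gives complete non-measurability of $E_i$ in $E$ with respect to $\SA_n$, and since $n$ was arbitrary, simultaneously for all~$n$.

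I do not expect any serious obstacle; the whole argument is a one-line sandwich, and no new use of CCC or of the non-separation property is required. The only point deserving care is the preliminary observation about $E\sminus E_i$ being $\SN$-full in $E$, which is implicit in Corollary~\ref{cor:intersection} but worth spelling out, since without it one would only obtain one-sided fullness and miss the ``completely non-measurable'' conclusion.
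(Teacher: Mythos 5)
Your proof is correct and follows essentially the same route as the paper: the paper's proof is exactly your ``sandwich'' step, taking $A\in\SA_n$ containing the full set, enlarging it to $B\in\SA^\oo$ with $B\sminus A\in\SJ_n$, and concluding $Q\sminus A\subset(Q\sminus B)\cup(B\sminus A)\in\SJ_n$ from $\SN$-fullness. Your preliminary paragraph on the complement $E\sminus E_i$ is fine but could be shortened: Corollary~\ref{cor:intersection} already gives complete non-measurability with respect to $\SA^\oo$, which by Proposition~\ref{prop:doubly-full} is precisely the $\SN$-fullness of both $E_i$ and $E\sminus E_i$, so the paper's general statement (for any $\SN$-full subset $C$ of $Q$) applies to both at once.
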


\begin{proof}
Let $C$ be a subset of $Q$ that is full with respect to $\SN$ and  $\SA^\oo$. Fix $n$ and take $A\in\SA_n$ containing $C$. Find $B\in\SA^\oo$ containing  $A$ and such that $B\sminus A\in\SJ_n$. As $B\supset C$, one has
  $Q\sminus B\in\SN\subset \SJ_n$.  Consequently, $Q\sminus A\in\SJ_n$ and $C$ is $\SJ_n$-full in $Q$.
\end{proof}

\begin{Rem}

Let $\SB$ be a $\si$-algebra,  and $\SJ$ a $\si$-ideal in $X$. Then $\SJ$ is said to have \textit{a base in} $\SB$ if, for each $J\in\SJ$, there exists $K\in\SJ$ containing $J$ and belonging to  $\SB$.

Suppose that $\SA_n=\SB\vartriangle \SJ_n$, where $\vartriangle$ stands for the symmetric difference. It is easy to see that, if $\SJ_n$ has a base in $\SB$, then, for each $A\in\SA_n$, there exists $B\in\SB$ containing $A$ and such that $B\sminus A\in\SJ_n$. In particular, the assumptions in the above Proposition are satisfied.
\end{Rem}

\section{Applications}

 \ \ {\bf \S1. $\mu$-measurability.} Let $\mu$ be a complete $\si$-finite (positive, countably additive) measure on a set $X$. Let $\SA$ be the $\si$-algebra  of $\mu$-measurable sets and  $\SI=\SJ$ its $\si$-ideal of $\mu$-zero sets. Write $\mu^*$ the outer measure of $\mu$, and let $E$ and $Q$ be subsets of $X$ with $E\subset Q$.  We note that if $\mu^*(Q)<\oo$, then $E$ is $\SJ$-full subset of $Q$ if\-f $\mu^*(E)=\mu^*(Q)$.

\begin{Cor}\label{cor:measure} Let $X$ be a 2nd countable topological $T_1$-space, and $\mu$ the completion of a regular Borel $\si$-finite  measure  vanishing on points. Then every infinite subset $E$ of $X$ can be decomposed into an infinite disjoint union of  its $\SJ$-full (equivalently -- completely non-measurable) subsets. In particular, each of these subsets has the same outer measure as~$E$.
\end{Cor}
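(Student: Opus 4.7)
The plan is to verify that $(X,\SA,\SJ)$ satisfies the two blanket assumptions of Section~2 and then invoke Theorem~\ref{th:main}; the outer-measure assertion will follow from a short envelope computation.

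The CCC hypothesis for $\SA\sminus\SJ$ is immediate from $\si$-finiteness. Writing $X=\bigcup_n X_n$ with $\mu(X_n)<\oo$, any pairwise disjoint family of non-negligible measurable sets can contain at most $k\mu(X_n)$ members of measure exceeding $1/k$ that meet $X_n$, so the whole family is a countable union of countable subfamilies.

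The substantive assumption is non-separation. Singletons lie in $\SJ$ because $\mu$ vanishes on points. Given $E\not\in\SJ$, $\si$-finiteness lets one reduce to $\mu^*(E)<\oo$; let $\tilde E$ be an envelope, and use regularity to pick a compact $K\sbs\tilde E$ with $\mu(K)>0$. Diffuseness of $\mu$ together with the countable base allows a dyadic-splitting procedure inside $K$ (at each stage a compact set of positive measure is carved into two disjoint compact subsets of positive measure, which is possible because an atomless Borel measure on a second-countable space has positive-measure subsets of arbitrarily small positive measure), producing a Cantor tree $\{K_\sigma:\sigma\in 2^{<\omega}\}\sbs\tilde E$ of compact sets with $\mu(K_\sigma)>0$. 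Because $X$ is second countable, the Borel $\si$-algebra has cardinality at most $2^{\aleph_0}$, and $\SA$ inherits the same bound modulo $\SJ$. A Bernstein-style transfinite construction, running along an enumeration of those measurable sets which neither swallow nor miss $\tilde E$ modulo $\SJ$ and drawing at each stage fresh witnesses from the Cantor tree, produces two disjoint subsets $F,G\sbs E$ such that no measurable set separates $F$ from $G$.

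With both bullet conditions verified, Theorem~\ref{th:main} supplies a disjoint decomposition $E=\bigcup_{i\in\BN} E_i$ into $\SJ$-full pieces. For each $i$, fix a measurable envelope $\tilde E_i$ of $E_i$. Fullness of $E_i$ in $E$ gives $E\sminus\tilde E_i\in\SJ$, so $\mu^*(E\sminus\tilde E_i)=0$. Subadditivity of $\mu^*$ then yields
$$
\mu^*(E)\leqs \mu^*(E\cap\tilde E_i)+\mu^*(E\sminus\tilde E_i)=\mu^*(E\cap\tilde E_i)\leqs \mu(\tilde E_i)=\mu^*(E_i)\leqs\mu^*(E),
$$
so $\mu^*(E_i)=\mu^*(E)$. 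The crux of the argument is the non-separation verification; the rest is a direct application of the abstract framework.
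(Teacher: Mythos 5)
The CCC verification and the closing outer-measure computation are fine, but the heart of your argument --- the verification of the non-separation property --- has a genuine gap, and it sits exactly where the paper instead cites the Lusin--Novikov theorem (\cite[Corollary 5.8]{Grze17}). Two problems. First, your Cantor tree $\{K_\sigma\}$ lives inside the envelope $\tilde E$, not inside $E$: the witnesses for $F$ and $G$ must be points of $E$, and a positive-measure compact subset of $\tilde E$ only tells you that its intersection with $E$ is non-negligible (hence uncountable), not that it supplies points of $E$ in the quantity a transfinite recursion needs. Second, and decisively, the Bernstein-style recursion does not close in ZFC. To defeat every candidate separator (equivalently, to make $F$ and $G$ both full in $\tilde E$, which is what non-separation amounts to here), you must run through roughly $2^{\aleph_0}$ Borel sets $B$ with $B\cap E\not\in\SJ$ and at each stage pick fresh points of $B\cap E$ not used before. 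Each pool $B\cap E$ is non-negligible, hence uncountable, but it is consistent with ZFC that a non-negligible set has cardinality $\aleph_1<2^{\aleph_0}$ (take $E$ itself of that cardinality); then every pool has at most $\aleph_1$ elements while the recursion has $2^{\aleph_0}$ stages, and it gets stuck. What you have written is essentially a CH argument, not a ZFC proof.

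The statement you are trying to establish from scratch --- every $E\not\in\SJ$ contains two disjoint subsets that no measurable set separates --- is precisely the Lusin--Novikov theorem, and its known proofs are of a different nature: Lusin--Novikov's transfinite decomposition argument; or the reduction ``if every pair of disjoint subsets of $E$ were separated, then every $Z\subset E$ would have the form $E\cap A$ with $A$ measurable, so $\mu^*$ restricted to $\mathcal{P}(E)$ would be a countably additive atomless measure with separable measure algebra, which after transfer to a standard space contradicts Vitali''; or Kumar's forcing argument discussed in the paper's Appendix. The paper's own proof of this corollary consists of citing that theorem together with the well-known CCC of $\SA\sminus\SJ$; if you replace your Bernstein construction by such a citation (or by one of the genuine proofs), the remainder of your argument goes through.
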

\begin{proof}
It is well known that  $\SA\sminus\SJ$ has CCC. The non-separation property of $X$ with respect to $\SA$ follows from the Lusin-Novikov Theorem \cite[Corollary 5.8]{Grze17}.
\end{proof}

{\bf\S2. Baire property.} Let us recall that a subset $E$ of a topological space $X$ \textit{has the Baire property} (or \textit{is $BP$-measurable})    if it can be written in the form
$
E=(O\sminus P)\cup Q,
$
where the set $O$ is open, while $P$ and $Q$ are 1st category.  Let $\SA=\SB\vartriangle\SI$ be the $\si$-algebra of sets having the Baire property in $X$, where $\SB$ denotes  the $\si$-algebra of  Borel sets and  $\SI=\SK$ is the $\si$-ideal of 1st category (meager) sets.  If $X$ is 2nd countable, then $\SA\sminus\SK$ has CCC (\cite[\S24.I,\,Theorem 3]{Kur66}).   If  $X$ is a 2nd countable $T_1$-space without isolated points, then $X$ has the non-separation property with respect to $\SA$
 by  the Lusin-Novikov Theorem (see \cite [Corollary 3.2]{Grze17}). Let us add that the existence of envelopes does not depend on CCC. Every subset $F\subset X$ has  an $F_\si$-envelope $\tilde F$ (Szpilrajn-Marczewski's theorem \cite[\S11.IV, Corollary 1]{Kur66}).

In the `Baire property case' completely non-measurable subsets of a set $Q$  were called \textit{completely non-Baire subsets of $Q$} in \cite{Cich07}. Following  this terminology, we have
\begin{Cor}\label{cor:baire} Let $X$ be a 2nd countable topological $T_1$-space without isolated points. Then every infinite subset of $X$ can be decomposed into an infinite disjoint union of its $\SK$-full (equivalently -- completely non-Baire) subsets.
\end{Cor}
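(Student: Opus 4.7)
The plan is to verify that the triple $(X,\SA,\SK)$, with $\SA$ the $\si$-algebra of sets with the Baire property and $\SK$ the $\si$-ideal of meager sets, satisfies the two standing assumptions laid out at the beginning of Section 2, and then to invoke Theorem~\ref{th:main} directly.

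First I would note the CCC condition. The paragraph introducing the Baire property case already recalls that for a 2nd countable space the family $\SA\sminus\SK$ satisfies CCC (this is \cite[\S24.I, Theorem 3]{Kur66}), so this bullet is immediate from the hypotheses on $X$.

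Next I would verify the non-separation property of $X$ with respect to $\SA$. Since $X$ is assumed to be a 2nd countable $T_1$-space without isolated points, the Lusin--Novikov type theorem (cited as \cite[Corollary 3.2]{Grze17}) guarantees that any non-meager subset $E$ of $X$ contains two disjoint subsets which cannot be separated by sets having the Baire property; in particular, singletons are meager (the space has no isolated points). This is exactly the non-separation property required by the second bullet.

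With both assumptions in place, Theorem~\ref{th:main} applies and yields, for every infinite $E\subset X$, an infinite disjoint decomposition $E=\bigsqcup_{n\in\BN} E_n$ in which each $E_n$ is a $\SK$-full subset of $E$. The equivalence with complete non-Baireness is then a formal consequence of Proposition~\ref{prop:doubly-full}: each $E_n$ together with its complement in $E$ forms a pair of disjoint full subsets of $E$, hence $E_n$ is $(\SA,\SK)$-completely non-measurable in $E$, which in the Baire property setting is precisely the notion of a completely non-Baire subset of $E$ in the sense of \cite{Cich07}. There is essentially no obstacle here; the entire content of the corollary is the verification that the Baire category setting fits into the abstract framework of Section 2, and both verifications have already been supplied in the motivating paragraph immediately preceding the statement.
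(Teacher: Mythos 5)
Your proposal is correct and follows exactly the route the paper intends: the paragraph preceding the corollary supplies both verifications (CCC for $\SA\sminus\SK$ from Kuratowski, and the non-separation property from the Lusin--Novikov theorem for 2nd countable $T_1$-spaces without isolated points), after which Theorem~\ref{th:main} applies directly. The identification of $\SK$-full decompositions with completely non-Baire ones via Proposition~\ref{prop:doubly-full} is likewise the paper's intended reading.
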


\begin{Rem}
  If $X$ is an uncountable Polish space, a generalization of the above Corollary for partitions into 1st category sets holds (\cite[Theorem 6.8]{Cich07}).
\end{Rem}

We recall that a topological space is called a \textit{Baire space} if its nonempty open subsets are 2nd category.
\begin{Prop} If $E\subset X$ is 2nd category at each point $x\in X$, then $E$ is full in $X$. Conversely, if $X$ is a Baire space and $E$ is full  in $X$, then $E$ is 2nd category at each point $x\in X$. 
\end{Prop}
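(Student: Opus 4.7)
The plan is to reduce both implications to the characterization in Proposition~\ref{prop:full}(b): with $Q=X$, the set $E$ is $\SK$-full in $X$ if and only if every BP-measurable $A\not\in\SK$ satisfies $A\cap E\not\in\SK$. Combined with the standard structure of BP-measurable sets (each $A\in\SA$ differs from some open set by a meager set), this reduces everything to unwinding definitions.

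For the forward implication, I would start with an arbitrary non-meager $A\in\SA$. Invoking the structure theorem, write $A\vartriangle O\in\SK$ for some open $O$; since $A\not\in\SK$, necessarily $O\not\in\SK$, so $O$ is nonempty. Picking any $x\in O$, the set $O$ is an open neighborhood of $x$, so by the hypothesis that $E$ is 2nd category at $x$ we get $O\cap E\not\in\SK$. Because $(A\cap E)\vartriangle(O\cap E)\subset A\vartriangle O\in\SK$, the intersection $A\cap E$ is also non-meager. Proposition~\ref{prop:full}(b) then yields fullness.

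For the converse, I would argue by contradiction. Suppose $E$ is full in $X$ but fails to be 2nd category at some $x\in X$. Then some open neighborhood $V$ of $x$ has $V\cap E\in\SK$. Since $X$ is a Baire space, the nonempty open set $V$ is itself non-meager, and $V\in\SA$ as an open set. Applying Proposition~\ref{prop:full}(b) with $A=V$, fullness of $E$ forces $V\cap E\not\in\SK$, contradicting the choice of $V$.

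I do not anticipate a real obstacle here: the whole argument is essentially a translation between the pointwise 2nd-category condition and the measure-theoretic fullness condition, mediated by the BP structure theorem (for the first direction) and by the Baire property of $X$ (for the second). The only step worth being careful about is the appeal to the well-known representation $A\vartriangle O\in\SK$ with $O$ open, which is already used implicitly in the paper when writing $\SA=\SB\vartriangle\SI$.
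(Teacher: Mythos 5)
Your proof is correct. The converse direction is essentially the paper's own argument: both reduce, via condition (b) of Proposition~\ref{prop:full}, to testing fullness on open sets, and then use the Baire space hypothesis to see that a nonempty open neighborhood $V$ of $x$ is non-meager, forcing $V\cap E\not\in\SK$. In the forward direction you take a genuinely different (though equally standard) route. The paper verifies the definition of fullness directly: for any $B\in\SA$ containing $E$ it invokes Kuratowski's localization theorem that $D(B)\sminus B$ is meager, where $D(B)$ is the set of points at which $B$ is of 2nd category, and then uses $D(B)\supset D(E)=X$ to conclude $X\sminus B\in\SK$. You instead verify the equivalent condition (b): given a non-meager $A\in\SA$, you replace it by an open $O$ with $A\vartriangle O\in\SK$, note $O$ is nonempty, and apply the pointwise hypothesis at a point of $O$ to get $O\cap E\not\in\SK$, hence $A\cap E\not\in\SK$. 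Your version leans only on the open-modulo-meager representation of sets with the Baire property (which the paper's definition of $BP$-measurability supplies immediately), whereas the paper's version quotes the theorem on $D(B)\sminus B$; the two facts are close relatives, and neither argument is longer or more delicate than the other. All the small points you need --- that $O\not\in\SK$ hence $O\neq\emptyset$, that $(A\cap E)\vartriangle(O\cap E)\subset A\vartriangle O$, and that a non-open neighborhood witnessing failure of 2nd category at $x$ may be shrunk to an open one --- are handled or are immediate.
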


\begin{proof} For a subset $A$ of $X$, denote by $D(A)$ the set of all points in which $A$ is 2nd category. Let $B$ be a set having the Baire property and containing $E$. Then, by \cite[\S 11.\,IV.5]{Kur66}, $D(B)\sminus B$ is of 1st category. Since $D(B)\supset D(E)=X$, we have  that $X\sminus B$ is 1st category.  

To see the converse, observe that, by the condition (b) of Proposition~\ref{prop:full}, 
$E$ is full in  a set $Q\subset X$ means that for each $A\in\SA$, if $A\cap Q\not\in\SI$, then $A\cap E\not\in\SI$. It is not difficult to show, that it is sufficient  to take open sets in the above condition. Thus, if $Q=X$, we have that $E$ is full in $X$ means that for each open set $A$ which is of 2nd category, $E\cap A$ is 2nd category. Let $V$ be an open neighborhood of $x\in X$. As $X$ is assumed to be a Baire space, $V$ is 2nd category, and so $V\cap E$ is 2nd category as well. 
\end{proof}

Theorem~\ref{th:Sierp1}, a solution to a problem posed by Kuratowski \cite[Probl\`eme 21]{Kur23}, can be generalized as follows.

\begin{Cor} 
Let $X$ be a 2nd countable topological Baire $T_1$-space without isolated points and $E\subset X$.   If $E$ is everywhere 2nd category in $X$, then it can be decomposed into the union of an infinite number of disjoint subsets, each of which is everywhere 2nd category in $X$.
\end{Cor}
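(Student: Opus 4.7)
The plan is to combine the preceding Proposition (linking "everywhere 2nd category" with "$\SK$-fullness") with Corollary~\ref{cor:baire}, using transitivity of fullness as the connecting step. The hypotheses on $X$ — 2nd countable $T_1$ without isolated points — are exactly what Corollary~\ref{cor:baire} requires, and the extra assumption that $X$ is Baire is exactly what is needed for the converse half of the preceding Proposition.

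First I would apply the first half of the preceding Proposition: since $E$ is everywhere 2nd category in $X$, it is $\SK$-full in $X$. Next I would invoke Corollary~\ref{cor:baire} on $E$ to obtain an infinite disjoint decomposition $E=\bigcup_{i=1}^{\infty} E_i$ in which each $E_i$ is $\SK$-full in $E$.

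The central step would be transitivity of fullness: each $E_i$ is $\SK$-full in $X$. I would verify this via characterization (b) of Proposition~\ref{prop:full}. Given $A\in\SA$ with $A\cap X\notin\SK$, fullness of $E$ in $X$ yields $A\cap E\notin\SK$, and fullness of $E_i$ in $E$ then yields $A\cap E_i\notin\SK$. This is essentially the transitivity already noted in passing at the end of the proof of Theorem~\ref{th:main} ("a full subset of a full subset is full"), now applied one level up. Finally, the converse half of the preceding Proposition — which requires $X$ to be a Baire space — converts each "$E_i$ is $\SK$-full in $X$" back into "$E_i$ is 2nd category at every point of $X$", yielding the desired decomposition.

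The proof is thus a three-step chain: everywhere 2nd category $\Rightarrow$ full (first half of the preceding Proposition) $\Rightarrow$ decomposable into $\SK$-full pieces (Corollary~\ref{cor:baire}, plus transitivity) $\Rightarrow$ pieces are everywhere 2nd category (converse half of the preceding Proposition, using Baire). I do not anticipate a real obstacle — the only care point is making explicit the transitivity of fullness, which is a one-line consequence of condition (b) of Proposition~\ref{prop:full} but was not isolated as a lemma.
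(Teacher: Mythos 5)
Your proposal is correct and follows essentially the same route as the paper: everywhere 2nd category implies full in $X$, decompose via Corollary~\ref{cor:baire}, use transitivity of fullness (which the paper also invokes, tacitly, with ``which are therefore full in $X$''), and then the converse half of the preceding Proposition (using that $X$ is Baire) to conclude each piece is everywhere 2nd category. The only cosmetic difference is that you apply transitivity directly to the countable decomposition, whereas the paper decomposes into two pieces and proceeds inductively.
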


\begin{proof} If $E$ is everywhere 2nd category in $X$, then it is full there. Applying Corollary~\ref{cor:baire}, $E$ can be decomposed into its two full subsets $E_1$ and $E_2$ , which are therefore full in X. Hence they are both everywhere 2nd category in $X$. Proceed inductively.
\end{proof}

{\bf \S3. Other examples.}

In view of Proposition~\ref{prop:sim} and the remark following it, we   have

\begin{Cor} With the assumptions of Corollaries~\ref{cor:measure} and \ref{cor:baire} combined, each infinite subset of $X$ admits a countable decomposition  into  subsets that are full (or completely non-measurable)  in terms of measure and Baire property simultaneously.
\end{Cor}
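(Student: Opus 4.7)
The plan is to interpret the statement as a direct application of Proposition~\ref{prop:sim} with two specific triples, after verifying that the structural hypotheses of that proposition are met through the Remark that follows it.

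First, set $\SA_1$ to be the $\sigma$-algebra of $\mu$-measurable sets with $\SJ_1$ its $\sigma$-ideal of $\mu$-null sets, and set $\SA_2$ to be the $\sigma$-algebra of sets having the Baire property with $\SJ_2 = \SK$ the $\sigma$-ideal of meager sets. Under the combined assumptions (a $2$nd countable $T_1$-space without isolated points equipped with the completion of a regular Borel $\sigma$-finite measure vanishing on points), Corollaries~\ref{cor:measure} and~\ref{cor:baire} give that each individual triple $(X,\SA_n,\SJ_n)$ satisfies CCC and the non-separation property. Then Proposition~\ref{prop:stability} applies to the intersection triple $(X,\SA^\infty,\SN)$, where $\SA^\infty=\SA_1\cap\SA_2$ and $\SN=\SJ_1\cap\SJ_2$, so Corollary~\ref{cor:intersection} yields an infinite disjoint decomposition $E=\bigcup_i E_i$ into $\SN$-full subsets.

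The main step is upgrading this to simultaneous fullness with respect to both $\SJ_1$ and $\SJ_2$. Here I invoke Proposition~\ref{prop:sim}, whose hypothesis asks that for each $n$ and each $A\in\SA_n$ there exist $B\in\SA^\infty$ with $A\subset B$ and $B\setminus A\in\SJ_n$. I would verify this through the Remark following Proposition~\ref{prop:sim}: both of our $\sigma$-algebras have the form $\SA_n=\SB\vartriangle\SJ_n$ with $\SB$ the Borel $\sigma$-algebra, and both $\SJ_n$ admit a base in $\SB$. For measure, every $\mu$-null set is contained in a Borel $\mu$-null set by regularity (and completeness of $\mu$). For category, every meager set is a countable union of nowhere dense sets, each contained in its closure, so is contained in a meager $F_\sigma$, hence in a Borel meager set. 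Since $\SB\subset\SA^\infty$, the required $B$ can be taken Borel, and the hypothesis of Proposition~\ref{prop:sim} is satisfied.

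Applying Proposition~\ref{prop:sim} to the decomposition $E=\bigcup_i E_i$ then shows that each $E_i$ is $\SJ_n$-full in $E$ for both $n=1$ and $n=2$, i.e.\ completely non-measurable in both senses simultaneously. I do not expect any serious obstacle; the only delicate point is remembering that the hypothesis of Proposition~\ref{prop:sim} produces $B$ in the intersection $\sigma$-algebra $\SA^\infty$, which is automatic here because the Borel sets lie in both $\SA_1$ and $\SA_2$.
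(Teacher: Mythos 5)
Your proposal is correct and follows exactly the route the paper takes: the paper derives this corollary directly from Proposition~\ref{prop:sim} and the remark following it, which is precisely the chain (Proposition~\ref{prop:stability} $\to$ Corollary~\ref{cor:intersection} $\to$ Proposition~\ref{prop:sim} via the base-in-$\SB$ condition) that you spell out. Your verification that both $\sigma$-ideals have Borel bases, and that the resulting $B$ lands in $\SA^\infty$ because $\SB\subset\SA_1\cap\SA_2$, fills in the details the paper leaves implicit.
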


Under additional set-theoretical assumptions about $X$,  decompositions into more than a countable number of subsets are possible. Actually, the main result  of \cite{Sie34a}, the already mentioned Th\'eor\`eme I, is concerned with  an uncountable case.  We now give a  general result of this type.

Let $A$ be a $\si$-algebra of subsets of a set $X$ containing all singletons. Define $\SI=\SI(\SA)$ as the $\si$-ideal of those sets in $\SA$  which are hereditarily in $\SA$, i.e., if $A$ belongs to $\SA$ together with each subset  $C$ of $A$, then $A\in\SI$. Taking into account \cite[Theorem 2]{Grz79}, we have

\begin{Cor}  Suppose the cardinality of $X$ is less than the first weakly inaccessible cardinal and $\SA\sminus \SI$ satisfies CCC. Then each uncountable subset  $Y$ of $X$ can be decomposed into $\aleph_1$ disjoint full (or completely non-measurable) subsets.
\end{Cor}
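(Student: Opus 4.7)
The plan is to run the proof of Theorem~\ref{th:main} with $\BN$ replaced by $\omega_1$, using \cite[Theorem 2]{Grz79} as the key substitute for Lemma~\ref{lem:main}. Under the cardinality bound on $X$ and CCC on $\SA\sminus\SI$, that theorem should supply an $\aleph_1$-version of Lemma~\ref{lem:main}: whenever $Y\notin\SI$, there exist a non-negligible measurable set $Q$ and a disjoint $\omega_1$-family $\{E_\alpha\}_{\alpha<\omega_1}$ of subsets of $Y\cap Q$ with each $E_\alpha$ full in $Q$. The degenerate case $Y\in\SI$ is immediate, since then every subset of $Y$ is automatically full in $Y$.

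With this ingredient in hand I would say that a non-negligible measurable set $P$ has the \emph{$\aleph_1$-equal sizes property relative to $Y$} if $Y\cap P$ contains an $\omega_1$-indexed disjoint family of subsets, each full in $P$. Take a maximal disjoint family of such $P$'s; by CCC it is countable, enumerate it as $\{Q_n\}_{n\in\BN}$. The exhaustion argument from the proof of Theorem~\ref{th:main} goes through unchanged: if $Y^*:=Y\sminus\bigcup_n Q_n$ were not in $\SI$, the $\aleph_1$-strengthened lemma applied to $Y^*$ would produce a measurable set with the $\aleph_1$-equal sizes property disjoint from all $Q_n$, contradicting maximality; hence $Y^*\in\SI$.

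Next, for each $n$ I would fix a witness family $\{C_{n,\alpha}\}_{\alpha<\omega_1}$ of disjoint subsets of $Y\cap Q_n$, each full in $Q_n$, and set $C_\alpha=\bigcup_n C_{n,\alpha}$ for $\alpha<\omega_1$. The verification in the proof of Theorem~\ref{th:main} that $C=\bigcup_n C_n$ is full in $E$ transcribes verbatim: if $C_\alpha\subset B\in\SA$, then $C_{n,\alpha}\subset B$, hence $Q_n\sminus B\in\SI$ for each $n$, so $\bigcup_n Q_n\sminus B\in\SI$, and the relation $Y^*\in\SI$ gives $Y\sminus B\in\SI$. Finally, replacing $C_0$ by its complement $Y\sminus\bigcup_{\alpha>0}C_\alpha$ (which is still full, as it contains $C_0$) turns the $\omega_1$-family into a genuine partition of $Y$ into $\aleph_1$ pairwise disjoint full subsets.

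The only nontrivial ingredient is the $\aleph_1$-strengthening of Lemma~\ref{lem:main}, provided by \cite[Theorem 2]{Grz79}; the rest is a mechanical transcription of the proof of Theorem~\ref{th:main} with $\omega_1$ in place of $\BN$. I expect the cardinality hypothesis (less than the first weakly inaccessible cardinal) to enter precisely at this point, to exclude real-valued-measurable-type obstructions to producing $\omega_1$ disjoint non-separable subsets inside a given non-negligible set.
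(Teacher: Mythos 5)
Your argument is internally coherent, but it takes a genuinely different (and much longer) route than the paper, which offers no proof at all beyond the citation: the Corollary is presented as an immediate consequence of \cite[Theorem 2]{Grz79}, a result stated for exactly this triple ($\SA$ containing the singletons, $\SI=\SI(\SA)$ the hereditary $\si$-ideal, $|X|$ below the first weakly inaccessible cardinal, CCC for $\SA\sminus\SI$) and already giving the partition of any $Y\not\in\SI$ into $\aleph_1$ pairwise disjoint sets ``of the same size as'' $Y$, i.e.\ full in $Y$; all that remains is the trivial case $Y\in\SI$ (where, as you note, every subset is full) and Proposition~\ref{prop:doubly-full} to pass to ``completely non-measurable''. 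You instead read the citation as supplying only a local $\aleph_1$-analogue of Lemma~\ref{lem:main} and then rerun the maximal-family/CCC exhaustion from Theorem~\ref{th:main}. That transcription is correct as far as it goes: the sets $C_\alpha=\bigcup_n C_{n,\alpha}$ are pairwise disjoint because the $Q_n$ are, their fullness in $Y$ follows from $(*)$ exactly as in the countable case, and enlarging one piece to absorb $Y\sminus\bigcup_\alpha C_\alpha$ preserves fullness. Moreover, your hypothesized seed lemma does follow from the stronger statement of \cite[Theorem 2]{Grz79} (take $Q$ to be an envelope of $Y$ and use, via condition (b) of Proposition~\ref{prop:full}, that a full subset of a full subset is full), so nothing in your proof is wrong --- it is merely redundant machinery layered on top of a citation that already does the whole job. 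The one substantive point you identify correctly is where the cardinality bound must enter: the non-separation property is not among the hypotheses here and is not automatic for the hereditary ideal $\SI(\SA)$, and it is precisely the weakly-inaccessible bound that rules out the possibility of a non-negligible $Y$ all of whose disjoint subset pairs are separable; but that is the content of \cite{Grz79} itself, not something recoverable from the Section 2 machinery, so your proof ultimately rests on a guess about what the cited theorem says rather than on an argument you can supply.
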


Further, we were informed by the referee of this paper that, in a separable metric space, under the non-existence of quasi-measurable cardinals less or equal $2^{\aleph_0}$, for a CCC $\si$-ideal $\SI$ with a Borel base every point-finite family $\SA\subset\SI$  can be decomposed into $\aleph_1$ subfamilies, the union of each of them being full in $\bigcup\SA$ relative to $\SI$  \cite[Theorem 3.6]{RZ}. Also, under some cardinal coefficients, any $cf(2^{\aleph_0})$-point family $\SA$ of subsets of a $\si$-ideal $\SI$ can be decomposed into $2^{\aleph_0}$ pairwise disjoint subfamilies, the union of each of them being full in $\bigcup\SA$ relative to $\SI$ (\cite[Corollary 2.1]{RZ}.

\section{Appendix}

\subsection{Historical comments}

 (1) The title of the present note refers to \textit{two theorems of Sierpi\'nski} for the simple reason that these theorems were first published by Sierpi\'nski. However, it ought to be mentioned that  in \cite[Suppl\'ement]{Sie34} Sierpi\'nski considers his  Th\'eor\`eme 1 as an easy consequence of a `method of Lusin' and in \cite[\S10\,(p.\,87)]{Kur66} the result reappears as `Lusin Theorem' with a sole reference to the Suppl\'ement. Both attributions strike us as being not correct. Admittedly, it seems possible that writing his comments in the Suppl\'ement Sierpi\'nski could have relayed   on a letter from Lusin  quoted there and did not yet know \cite{Lus34}. However, in  \cite{Sie34a} Sierpi\'nski refers directly to \cite{Lus34}, in which paper Lusin himself admits that the method of proof is due to Novikov; and still only the name of Lusin is mentioned in\cite{Sie34a}. Similarly, writing much later, Kuratowski must have known \cite{Lus34} and its reviews \cite{Chit34}, \cite{Ros34}, in which the proofs of \cite{Lus34} are referred to as  Novikov's. Yet, he keeps quoting only the Suppl\'ement and calls the result `Lusin Theorem'.
 What would be the reason for this silence about Novikov, we do not know.

 (2)
Ashutosh Kumar stated  in his thesis \cite[Theorem 1]{Kum14}  the decomposition of a  subset of $\BR^n$ into two disjoint full subsets in the Baire property case and the Lebesgue measure case. The Baire part of that statement is a new result corresponding to our Corollary~\ref{cor:measure}.
Kumar attributes the whole theorem to Lusin quoting \cite{Lus34}, despite the fact that no such statement is present in \cite{Lus34} and despite the fact that any result quoted from \cite{Lus34} should be attributed to Lusin and Novikov.   He repeats the misquotation in \cite{Kum13} and then, together with Shelah, in \cite{Kum17}. We are also aware of the work by Marcin Michalski (although no preprint is available yet), who in his talk \cite{Mich17} gives a general definition of an $\SI$-full subset and announces a proof of the Lebesgue measure and the Baire property statements in $\BR$ by using a modification of the technique of \cite{Lus34}, still attributing the result to Lusin.

\subsection{Kumar's proof}

Consider a triple $(X,\SA,\SI)$ introduced in Section 1 and assume that $\SA$-measurable envelopes exist in the space $X$. Then, the following proposition holds.

\begin{Prop}\label{prop:equiv}

For  a triple $(X,\SA,\SI)$ and a subset $Y$ of $X$,  the following conditions are equivalent.
\begin{enumerate}
\item[\rm{(a)}]  $Y$ contains  $Z\not\in \SI$  such that $Y\sminus Z$ is full in $Y$.
\item[\rm{(b)}]  $Y$ contains  $Z\not\in\SI$  such that $Z$ and $Y\sminus Z$ cannot be separated by an $\SA$-measurable set.

\item[\rm (c)]  $Y$ contains disjoint subsets $F$ and $G$ that cannot be separated by  an $\SA$-measurable set.
\end{enumerate}
\end{Prop}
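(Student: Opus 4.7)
My plan is to prove the cyclic chain $(a)\Rightarrow(b)\Rightarrow(c)\Rightarrow(a)$, with the first two implications being short formal arguments and $(c)\Rightarrow(a)$ carrying the real content. For $(a)\Rightarrow(b)$, I argue by contraposition: if a measurable set $A$ separates $Z$ from $Y\sminus Z$ (so $Z\sbs A$ and $A\cap(Y\sminus Z)=\emptyset$), then $B:=X\sminus A\in\SA$ contains $Y\sminus Z$, and the fullness of $Y\sminus Z$ in $Y$ forces $Y\sminus B=Y\cap A\in\SI$; since $Z\sbs Y\cap A$, this would put $Z\in\SI$, contradicting (a). The implication $(b)\Rightarrow(c)$ is immediate: take $F:=Z$ and $G:=Y\sminus Z$ (note $Y\sminus Z\neq\emptyset$, for otherwise $Z$ and $\emptyset$ would be trivially separated by $X$).

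For $(c)\Rightarrow(a)$ I would replicate the envelope construction used in the proof of Lemma~\ref{lem:main}. Let $\tilde F,\tilde G$ be envelopes of the disjoint non-separable sets $F,G\sbs Y$, and put $Q:=\tilde F\cap\tilde G\in\SA$. Non-separability forces $\tilde F\cap G\notin\SI$ (otherwise $\tilde F\sminus(\tilde F\cap G)$ would be a measurable separator of $F$ and $G$), so $Q\supset\tilde F\cap G$ yields $Q\notin\SI$. Lemma~\ref{lem:hull} then shows that $Q$ is an envelope of both $F\cap Q$ and $G\cap Q$, so each is full in $Q$. I set $Z:=F\cap Q$: since $Z$ is full in $Q\notin\SI$, we must have $Z\notin\SI$ (otherwise fullness with $B=Z$ gives $Q\sminus Z\in\SI$, whence $Q=Z\cup(Q\sminus Z)\in\SI$).

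It remains to verify that $Y\sminus Z$ is full in $Y$. Given $B\in\SA$ with $Y\sminus Z\sbs B$, disjointness of $F$ and $G$ gives $G\cap Q\sbs Y\sminus Z\sbs B$, and fullness of $G\cap Q$ in $Q$ forces $Q\sminus B\in\SI$. Moreover $Z\sbs Q$ gives $Y\sminus Q\sbs Y\sminus Z\sbs B$, so $Y\sminus B\sbs Q\sminus B\in\SI$, hence $Y\sminus B\in\SI$. The \emph{main obstacle} I anticipate is precisely the choice of $Z$: the naive candidate $Z=F$ fails because nothing controls $F$ outside $Q$, and the key move is to pass to $Z=F\cap Q$, since then $Y\sminus Z$ automatically absorbs $Y\sminus Q$, confining the residue $Y\sminus B$ inside $Q$ where the fullness of $G\cap Q$ takes over.
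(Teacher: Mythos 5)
Your proposal is correct and follows essentially the same route as the paper: the implications $(a)\Rightarrow(b)\Rightarrow(c)$ are the easy formal ones (the paper merely calls them evident), and for $(c)\Rightarrow(a)$ you use exactly the paper's construction $Q=\tilde F\cap\tilde G$ with $Z=Q\cap F$, invoking Lemma~\ref{lem:hull} and the non-separability argument of Lemma~\ref{lem:main} to get $Q\notin\SI$ and the fullness of $Q\cap G$ in $Q$. Your final verification that $Y\sminus Z$ is full in $Y$ is a direct unwinding of the definition, whereas the paper phrases it as gluing the full sets $Y\sminus Q$ and $Y\cap(Q\sminus F)$; the content is the same.
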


\begin{proof} The implications from (a) to (b) and from (b) to (c) being evident, only (c)  implies (a) has to be shown.

Assume (c), and define $Q=\tilde F\cap \tilde G$. By applying Lemma~\ref{lem:hull} (with $Y= E$), we have $Q\cap F$ and $Q\cap G$, subsets of $Y\cap Q$, which are full in $Q$. Then (a) holds with $Z=Q\cap F$. 

Indeed, we note that $Y\cap (Q\sminus F)$, containing $Q\cap G$ that is full in $Q$, must be full in $Q\cap Y$. Obviously $Y\sminus Q$ is full in itself, and so  $(Y\sminus Q)\cup [Y\cap(Q\sminus F)]$ is full in $(Y\sminus Q)\cup(Y\cap Q)$. Hence $Y\sminus (Q\cap  F)$ is full in $Y$.  
\end{proof}

 Kumar's proof of his Theorem 1 in \cite{Kum14} proceeds first with the  measure case. He begins by remarking that one can reduce to consider the Lebesgue measure on $[0,1]$ and says that it is enough to show that every non-negligible subset $Y\subset [0,1]$ satisfies the condition (a) above. This is so, because then, one can find the needed decomposition of $Y$ into two full subsets. He   defines the needed full subset $E$ without actually showing that the so defined $E$ and its complement are indeed full in $Y$. In the next step the proof  becomes indirect. By denying the needed condition, he concludes that every two disjoint subsets of $Y$ would be separated by a measurable set (in his phrasing the restriction of the outer  Lebesgue measure to $Y$ is countably additive on the power set of $Y$).  At this point the proof stops being classical. By a  use of forcing, it is shown that the above conclusion about the outer Lebesgue measure on $Y$  leads to a contradiction. 

The proof of the Baire case is similar (although the definition of $E$ is not  stated and so the proof that it has required properties is not given either). But assuming that $E$ can be well defined, which Kumar silently does,  by analogy with the measure case one can find a 2nd category subset $Y$ of $[0,1]$ such that for every 2nd category subset $E$ of $Y$ its complement $Y\sminus E$ is not full in $Y$. Then, an appropriately modified use of forcing gives a contradiction. 

Here are our comments.

(1) The measure case. It is the contradiction proved by forcing that is actually proven classically in the the quoted note of Lusin \cite{Lus34}.

(2) Kumar's  reasoning   showing the existence of a subset $Y$ whose every two disjoint subsets can be separated by a Lebesgue measurable set remains the same in the general framework of $\SA$-measurability. It shows that, if for each non-negligible subset $Z$ of $Y$ one has $Y\sminus Z$ which is not full in $Y$, then every two disjoint subsets of $Y$ can be separated by an $\SA$-measurable set. Thus, also in the Baire case the contradiction gotten by forcing is  a consequence of a result in \cite{Lus34}

(3) The (omitted by Kumar) argument showing that $E$ and its complement are full in $Y$ can be adapted to the general $\SA$-measurable case. But  there is no point giving details, since it follows from Proposition~\ref{prop:equiv} that our approach and Kumar's are equivalent.

.


\begin{thebibliography}{11}

\bibitem{Chit34}
E. W. Chittenden,
\textit{Review of \cite{Lus34}}, Zentralblatt f\"ur Mathematik (zbmath.org: document no. 0009.10402).

\bibitem{Cich07}
J. Cicho\'n, M. Morayne, R. Ra\l owski, C. Ryll--Nardzewski and S. \.Zeberski,
\textit{On nonmeasurable unions}, Topology and Appl., {\bf 154} (2007), 884--893.
\bibitem{Grz79}
E. Grzegorek,
\textit{On a paper by Karel Prikry concerning Ulam's problem on families of measures},
Coll. Math., {\bf 52} (1979), 197--208.



\bibitem{Grze17}
E. Grzegorek and I. Labuda,
\textit{Partitions into thin sets and forgotten theorems of Kunugi and Lusin-Novikov},
 to appear in Coll. Math.


\bibitem{Kum14}
A. Kumar,
\textit{On some problems in set-theoretical analysis}, PhD Thesis, University of Wisconsin-Madison 2014, 35 pp. http://www.math.huji.ac.il/~akumar/thesis.pdf

\bibitem{Kum13}
A. Kumar,
\textit{Avoiding rational distances}.
Real Anal. Exchange, {\bf 38(1)}, (2012/13), 493--498.

\bibitem{Kum17}
A. Kumar and S. Shelah,
\textit{A transversal of full outer measure}, 

 http://www.math.huji.ac.il/akumar/tfom.pdf.

\bibitem{Kur23}
K. Kuratowski,
\textit{Probl\`emes},
Fund. Math. {\bf 4} (1923), 368--370.
\bibitem{Kur66}
K. Kuratowski,
\textit{Topology, Vol. 1},
Academic Press--PWN 1966.

\bibitem{Lus34}
N. Lusin,
\textit{Sur la d\'ecomposition des ensembles},
C. R. Acad. Sci. Paris, {\bf 198} (1934), 1671--1674.

\bibitem{Mich17}
M. Michalski,
\textit{Odkopane twierdzenie \L uzina}, III Warsztaty z Analizy Rzeczywistej, 20-21 May 2017, Konopnica; http://www.im.p.lodz.pl/semwa/pliki/MMichalski2017.pdf.

\bibitem{RZ}
R. Ra\l owski and S. \.Zeberski,
\textit{Completely measurable families},
Central European J. Math.. {\bf 8}(4), (2010), 683--687. 




\bibitem{Ros34}
A. Rosenthal,
\textit{Review of \cite{Lus34}}, Jahrbuch \"uber die Fortschritte der Mathematik (zbmath.org: document no. 60.0039.01).

\bibitem{Sie34}
W. Sierpi\'nski,
\textit{Hypoth\`ese du continu},
Monografje Matematyczne, Warszawa--Lw\'ow 1934.

\bibitem {Sie34a}
W. Sierpi\'nski,
\textit{Sur une propri\'et\'e des ensembles lin\'eaires quelconques},
Fund. Math., {\bf 23} (1934), 125--134.

\bibitem{Zeb07}
S. \.Zeberski,
\textit{On completely nonmeasurble unions},
Math. Log. Quart. {\bf53(1)} (2007), 38--42.
\end{thebibliography}
\end{document}